\newtheorem{thm}{Theorem}
\newtheorem{lem}{Lemma}
\newtheorem{cor}{Corollary}
\theoremstyle{definition}
\newtheorem{defn}{Definition}
\theoremstyle{remark}
\newtheorem{rmk}{Remark}
\newcommand{\op}{\mathrm{op}}
\newcommand{\fp}{\operatorname{fp}}
\newcommand{\Ab}{\mathrm{Ab}}
\newcommand{\Ext}{\mathrm{Ext}}
\newcommand{\lmods}{\text{-}\mathrm{mod}}
\newcommand{\Rmods}{\mathrm{Mod}\text{-}}
\newcommand{\Lmods}{\text{-}\mathrm{Mod}}
\newcommand{\Hom}{\mathrm{Hom}}
\newcommand{\Flat}{\mathrm{Flat}}
\newcommand{\Fpinj}{\mathrm{Fpinj}}
\newcommand{\Inj}{\mathrm{Inj}}
\newcommand{\C}{\mathcal{C}}
\newcommand{\A}{\mathcal{A}}
\begin{document}
\title{Characterisations of purity in a locally finitely presented additive category: A short functorial proof}
\author{Samuel Dean}

\maketitle
\abstract{In this expository article, we will give an efficient functorial proof of the equivalence of various characterisations of purity in a finitely accessible additive category $\C$. The complications of the proofs for specific choices of $\C$ are contained in the description of fp-injective and injective objects in $(\fp\C,\Ab)$, the category  of additive functors $\fp\C\to\Ab$. For example, the equivalence of many characterisations of purity in a module category $\A\Lmods$ is a simple corollary of what we will prove here, since we know which objects are fp-injective, and which objects are injective, in $(\A\lmods,\Ab)$.}
\section{Extending functors over direct limits}
\paragraph{Acknowledgements}I thank Sergio Estrada and Pedro Guil Asensio for inviting me to visit the University of Murcia, where I wrote this, and for helpful discussions. Nothing herein deserves to be called original, but it is meant to be helpful.

\emph{All categories and functors mentioned in this paper are additive.} We assume some background on locally finitely presented categories, which can be gotten from \cite{crawleyboevey1994}. We write $\Ab$ for the category of abelian groups and, for a small category $\A$, we write $(\A,\Ab)$ for the category of functors $\A\to\Ab$, and $\Flat(\A,\Ab)$ for the category of flat functors $\A\to\Ab$.  I will write \textbf{direct limit} to mean the same as directed colimit.
\begin{defn}Let $\C$ be a category with direct limits. An object $A\in\C$ is \textbf{finitely presented} if the representable functor 
\begin{displaymath}
\C(A,-):\C\to\Ab
\end{displaymath}
preserves direct limits. We write $\fp \C$ for the full subcategory of finitely presented objects in $C$
\end{defn}
\begin{defn}Let $\C$ be a category with direct limits. We say that $\C$ is \textbf{locally finitely presented} if $\fp \C$ is skeletally small and every object is a direct limit of finitely presented objects.
\end{defn}
\begin{thm}[{\cite{crawleyboevey1994}}]\thlabel{flatemb} For any locally finitely presented category $\C$, the functor 
\begin{displaymath}
\C\to ((\fp \C)^\op,\Ab):C\mapsto \C(-,C)|_{(\fp \C)^\op}
\end{displaymath}
is fully faithful and restricts to an equivalence $\C\simeq\Flat((\fp \C)^\op,\Ab)$.
\end{thm}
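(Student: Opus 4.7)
Let $y:\fp C\to((\fp C)^\op,\Ab)$ denote the Yoneda embedding and write $\Phi$ for the functor of the theorem. My strategy is to view $\Phi$ as the essentially unique filtered-colimit-preserving extension of $y$, identifying both $C$ and $\Flat((\fp C)^\op,\Ab)$ as ind-completions of $\fp C$. First I would verify that $\Phi(c)$ is flat for every $c\in C$. Writing $c=\varinjlim_{i\in I}c_i$ with $c_i\in\fp C$, finite presentability of an arbitrary $a\in\fp C$ gives
\begin{displaymath}
\Phi(c)(a)=C(a,\varinjlim_i c_i)=\varinjlim_i C(a,c_i)=\varinjlim_i y(c_i)(a),
\end{displaymath}
and since filtered colimits in $((\fp C)^\op,\Ab)$ are computed pointwise, this identifies $\Phi(c)$ with the filtered colimit $\varinjlim_i y(c_i)$, hence as a flat functor.

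For full faithfulness, I would use the same decomposition together with the fact that $C(-,d)$ sends colimits to limits, to compute
\begin{displaymath}
\Hom(\Phi(c),\Phi(d))=\Hom(\varinjlim_i y(c_i),\Phi(d))=\varprojlim_i\Hom(y(c_i),\Phi(d))=\varprojlim_i C(c_i,d)=C(c,d),
\end{displaymath}
where the third equality is Yoneda applied to the representables $y(c_i)$ (together with $\Phi(c_i)=y(c_i)$) and the last uses $c=\varinjlim_i c_i$.

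For the essential image, given a flat $F\in\Flat((\fp C)^\op,\Ab)$, the standard characterisation of flat additive functors as filtered colimits of representables yields a directed system $(c_i)$ in $\fp C$ with $F=\varinjlim_i y(c_i)$. Setting $c=\varinjlim_i c_i$ in $C$, the first step of the proof gives $\Phi(c)=\varinjlim_i y(c_i)=F$. The one substantive input beyond Yoneda and the defining property of finite presentation is this representation of a flat functor as a filtered colimit of representables; this is the place where the argument genuinely invokes a result of \cite{crawleyboevey1994}, but it is a standard fact about flat additive functors.
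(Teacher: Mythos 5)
The paper gives no proof of this theorem; it is quoted directly from \cite{crawleyboevey1994}, so there is nothing internal to compare against. Your argument is correct and is essentially the standard one from that reference: identify both $C$ and $\Flat((\fp C)^\op,\Ab)$ as the closure of $\fp C$ under direct limits, using finite presentability of each $a\in\fp C$ to see that $\Phi$ carries $\varinjlim_i c_i$ to $\varinjlim_i y(c_i)$ computed pointwise. Two small points are worth making explicit if you write this up fully. First, in the full-faithfulness computation the chain of isomorphisms must be checked to compose to the map actually induced by $\Phi$; it does, since under $\Hom(\varinjlim_i y(c_i),\Phi(d))\cong\varprojlim_i C(c_i,d)$ the morphism $\Phi(f)$ corresponds to the compatible family $(f\iota_i)_i$, where $\iota_i:c_i\to c$ are the colimit insertions, and this family is exactly the image of $f$ under $C(c,d)\cong\varprojlim_i C(c_i,d)$. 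Second, the input you isolate at the end --- that a flat additive functor $(\fp C)^\op\to\Ab$ is a direct limit of representables --- is genuinely the substantive content here (it is the additive analogue of the Lazard--Govorov theorem, due to Oberst and R\"ohrl, and is also where one reconciles the tensor-exactness definition of flatness with the colimit description); your proof correctly reduces the theorem to that fact rather than reproving it, which is a legitimate division of labour provided you take that characterisation, and not tensor-exactness, as given.
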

We will use tensor products of functors. For this, there are references such as \cite{fisher1968} and \cite{fisherpalmquistnewell1971}, but we offer the following definition.
\begin{defn}Let $A$ be a small category. For functors $G:\A^\op\to\Ab$ and $F:\A\to\Ab$, the tensor product $G\otimes_\A F$ is an abelian group given by the coend formula (see \cite{maclane1998} for coends) 
\begin{displaymath}
G\otimes_\A F=\int^{A\in \A}(GA)\otimes_\mathbb{Z}(FA).
\end{displaymath}
\end{defn}
\begin{lem}\thlabel{tensyon}Let $\A$ be a small category. For any object $A\in \A$ and any functor $F:\A\to\Ab$, there is an isomorphism
\begin{displaymath}
\A(-,A)\otimes_\A F\cong FA
\end{displaymath}
which is natural in $F$ and $A$.
\end{lem}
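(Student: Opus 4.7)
The plan is to exhibit mutually inverse maps $\varphi$ and $\psi$ and invoke the universal property of the coend. Unpacking the definition, $A(-,a)\otimes_A F$ is the coend $\int^{b\in A} A(b,a)\otimes_\mathbb{Z} Fb$, whose elements I will denote by classes $[f\otimes y]$ for $f\in A(b,a)$ and $y\in Fb$, subject to the coend identifications.

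First I would define $\varphi: A(-,a)\otimes_A F \to Fa$ by specifying its wedge components $A(b,a)\otimes Fb \to Fa$, $f\otimes y \mapsto (Ff)(y)$. That this family is a wedge is a short calculation: for any $g: b\to b'$, $f\in A(b',a)$, $y\in Fb$, both paths around the coend diagram send the triple to $F(f\circ g)(y)$, so by the universal property there is a unique $\varphi$ as required.

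For the inverse, I would set $\psi: Fa \to A(-,a)\otimes_A F$, $y \mapsto [\mathrm{id}_a \otimes y]$, using the canonical map from $A(a,a)\otimes Fa$ into the coend. The composite $\varphi\psi$ is the identity, since $\varphi\psi(y) = F(\mathrm{id}_a)(y) = y$. To check $\psi\varphi = \mathrm{id}$, the key step is the coend relation applied to the morphism $f: b\to a$: writing $A(f,a)$ for the contravariant action, one has $A(f,a)(\mathrm{id}_a) = f$, so the relation identifies $[f\otimes y]$ with $[\mathrm{id}_a\otimes Ff(y)]$, which is precisely $\psi\varphi[f\otimes y]$.

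Naturality in $F$ and in $a$ is immediate, since $\varphi$ and $\psi$ are defined uniformly in terms of the action of $F$ on morphisms and of the identity morphism $\mathrm{id}_a$. The only mild obstacle is keeping the direction of the coend relation straight when verifying $\psi\varphi = \mathrm{id}$; everything else is bookkeeping with the universal property.
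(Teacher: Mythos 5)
Your proof is correct: it is the standard co-Yoneda argument via the universal property of the coend, which is exactly the ``exercise in the calculus of coends'' that the paper's proof delegates to the reader (or to Fisher's Proposition 1.1). The wedge verification, the section $y\mapsto[\mathrm{id}_a\otimes y]$, and the application of the coend relation at the morphism $f$ itself to obtain $\psi\varphi=\mathrm{id}$ are all as they should be.
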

\begin{proof}
See \cite[Proposition 1.1]{fisher1968} or take this as an exercise in the calculus of coends.
\end{proof}
\begin{defn}Let $\C$ be a locally finitely presented category. For any functor $F:\fp \C\to\Ab$, define $\overrightarrow{F}:\C\to\Ab$ by
\begin{displaymath}
\overrightarrow{F}C=\C(-,C)|_{(\fp \C)^\op}\otimes _{\fp \C}F
\end{displaymath}
for any $C\in \C$.
\end{defn}
\begin{thm}Let $\C$ be a locally finitely presented category. For any functor $F:\fp \C\to\Ab$, $\overrightarrow{F}$ preserves direct limits and there is an isomorphism $\overrightarrow{F}|_{\fp \C}\cong F$ which is natural in $F$. If $E:\C\to\Ab$ preserves direct limits and $E|_{\fp \C}\cong F$ then $E\cong\overrightarrow{F}$.
\end{thm}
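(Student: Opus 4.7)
The plan is to verify the three assertions in the order they are stated, drawing on \thref{tensyon} and the flat embedding of \thref{flatemb} as the only non-trivial inputs.

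First I would handle the restriction isomorphism. If $a\in\fp C$ then $C(-,a)|_{\fp C}$ is just the representable functor $(\fp C)(-,a)$, so applying \thref{tensyon} directly yields a natural isomorphism $\overrightarrow{F}a\cong Fa$; naturality in $F$ is built into the definition of $\overrightarrow{F}$, and naturality in $a$ is the statement of \thref{tensyon}.

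Next I would show that $\overrightarrow{F}$ preserves direct limits. The assignment $c\mapsto C(-,c)|_{\fp C}$ preserves direct limits, since direct limits in $((\fp C)^\op,\Ab)$ are computed pointwise and by definition each evaluation $c\mapsto C(a,c)$ with $a\in\fp C$ preserves direct limits. Post-composing with $-\otimes_{\fp C}F$, which is a left adjoint and hence colimit-preserving in its first argument, yields the required direct-limit preservation of $\overrightarrow{F}$.

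For the uniqueness statement, the cleanest approach is to recognise $\overrightarrow{F}$ as the left Kan extension of $F$ along the inclusion $\fp C\hookrightarrow C$: the standard coend formula for the pointwise left Kan extension is exactly $C(-,c)|_{\fp C}\otimes_{\fp C}F$. Given a direct-limit-preserving $E:C\to\Ab$ together with an isomorphism $F\cong E|_{\fp C}$, the universal property of the Kan extension produces a canonical natural transformation $\overrightarrow{F}\to E$ extending it. To verify it is a natural isomorphism, fix $c\in C$ and write $c=\varinjlim_i a_i$ with $a_i\in\fp C$: by the first two parts and the hypothesis on $E$, both $\overrightarrow{F}c$ and $Ec$ compute $\varinjlim_i Fa_i$, and the component of our comparison map at $c$ is the direct limit of the isomorphisms $Fa_i\cong Ea_i$, hence an isomorphism.

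The main obstacle is the naturality claim in the uniqueness statement: a purely pointwise construction of $\overrightarrow{F}c\cong Ec$ from a chosen presentation $c=\varinjlim_i a_i$ is straightforward, but assembling these into a natural isomorphism in $c$ is cleanly handled only by invoking the Kan extension viewpoint (or, equivalently, by using the canonical diagram $(\fp C\downarrow c)\to\fp C$, which is itself functorial in $c$).
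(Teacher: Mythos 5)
Your proposal is correct and follows essentially the same route as the paper: the restriction isomorphism and direct-limit preservation come straight from the tensor/coend definition together with \thref{tensyon}, and the uniqueness is proved by producing the canonical comparison map $\overrightarrow{F}\to E$ (which the paper builds explicitly out of the coend via $f\otimes x\mapsto (Ef)(\alpha_a x)$ --- precisely the Kan-extension counit you invoke) and checking it is an isomorphism on finitely presented objects, whence everywhere because both functors preserve direct limits. The only difference is presentational: you phrase the comparison map via the universal property of the pointwise left Kan extension, while the paper writes the same map down concretely.
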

\begin{proof}
Variations of this statement appear in many places, but we will give a proof, for the sake of self-containment, which similar to that at \cite[3.16]{dean2017}. See \cite{auslander1976} for a very simple argument when $F$ is finitely presented.  

The property that $\overrightarrow{F}$ preserves direct limits and restricts to $F$ on $\fp \C$ follows directly from the definition of $\overrightarrow{F}$ and \thref{tensyon}.

For such a functor $E:\C\to\Ab$, let $\alpha:F\to E|_{\fp \C}$ be an in isomorphism and, for each $C\in \C$, assemble the morphisms
\begin{displaymath}
C(A,C)\otimes_{\mathbb{Z}}FA\to EC:f\otimes x\mapsto ((Ef)\alpha_A)x \hspace{10mm} (A\in\fp \C)
\end{displaymath}
each of which is natural in $C$, into a morphism
\begin{displaymath}
\overrightarrow{F}C=\C(-,C)|_{(\fp \C)^\op}\otimes F\to EC
\end{displaymath}
which is natural in $C$. This morphism is an isomorphism when $C\in\fp \C$. Since both $\overrightarrow{F}$ and $E$ preserve direct limits, it follows that this morphism is an isomorphism for any $C\in\C$.
\end{proof}
\section{Purity in a locally finitely presented category}
\begin{defn}Let $\C$ be a locally finitely presented category. A sequence 
\begin{displaymath}
0\to A\to B\to C\to 0
\end{displaymath}
in $\C$ is \textbf{pure-exact} if and only if the induced sequence
\begin{displaymath}
0\to \C(-,A)|_{(\fp \C)^\op}\to \C(-,B)|_{(\fp \C)^\op}\to \C(-,C)|_{(\fp \C)^\op}\to 0
\end{displaymath}
is exact.
\end{defn}
\begin{defn}For a functor $F:\A\to\Ab$, we define its \textbf{dual} to be the functor $F^*:\A^\op\to\Ab$ defined by $F^*A=\Hom_{\mathbb{Z}}(FA,\mathbb{Q}/\mathbb{Z})$.
\end{defn}
If the reader is working in a slightly different context, with a $k$-linear locally finitely presented category, and prefers to replace $\mathbb{Z}$ by $k$ and $\mathbb{Q}/\mathbb{Z}$ by some injective cogenerator in $k\Lmods$, then they may do so. The following theorem will still hold.
\begin{defn}A functor $F:\A\to\Ab$ is said to be \textbf{fp-injective} if $\Ext^1(-,F)|_{(\fp(\A,\Ab))^\op}=0$. We write $\Fpinj(\A,\Ab)$ for the category of all fp-injective functors $\A\to\Ab$ and $\Inj(\A,\Ab)$ for the category of all injective functors $\A\to\Ab$.
\end{defn}

\begin{rmk}Let $\C$ be a locally finitely presented category. It is equivalent to $\Flat((\fp\C)^\op,\Ab)$, which is closed under extensions in $((\fp\C)^\op,\Ab)$. By \cite[Lemma 10.20]{buhler}, this implies that any exact structure on $((\fp\C)^\op,\Ab)$ restricts to an exact structure on $\Flat((\fp\C)^\op,\Ab)$. The abelian exact structure $((\fp\C)^\op,\Ab)$ restricts to the exact structure on $\Flat((\fp\C)^\op,\Ab)$ which corresponds to the class of pure-exact sequences on $\C$. Therefore, the pure-exact sequences on $\C$ form an exact structure on $\C$. In particular, for any pure-exact sequence 
\begin{displaymath}
    0\to A\overset{f}{\to} B\overset{g}{\to} C\to 0
\end{displaymath}
in $\C$, the pullback of $g$ exists along any morphism to $C$, as does the pushout of $f$ along any morphism from $A$.
\end{rmk}

\begin{thm}Let $\C$ be a locally finitely presented category. For a sequence of maps
\begin{displaymath}
0\to A\to B\to C\to 0
\end{displaymath}
in $\C$, the following are equivalent.

\begin{enumerate}
\item It is pure-exact.
\item It is a direct limit of split exact sequences.
\item For any $F\in \fp(\fp\C,\Ab)$, the induced sequence
\begin{displaymath}
0\to \overrightarrow{F}A\to \overrightarrow{F}B\to \overrightarrow{F}C\to 0
\end{displaymath}
is exact in $\Ab$.
\item For any $F\in (\fp \C,\Ab)$, the induced sequence
\begin{displaymath}
0\to \overrightarrow{F}A\to \overrightarrow{F}B\to \overrightarrow{F}C\to 0
\end{displaymath}
is exact in $\Ab$.
\item For any $F\in \Fpinj(\fp \C,\Ab)$, the induced sequence
\begin{displaymath}
0\to \overrightarrow{F}A\to \overrightarrow{F}B\to \overrightarrow{F}C\to 0
\end{displaymath}
is exact in $\Ab$.
\item For any $F\in \Inj(\fp \C,\Ab)$, the induced sequence
\begin{displaymath}
0\to \overrightarrow{F}A\to \overrightarrow{F}B\to \overrightarrow{F}C\to 0
\end{displaymath}
is exact in $\Ab$.
\item The induced sequence
\begin{displaymath}
0\to \C(-,C)|_{(\fp \C)^\op}^*\to \C(-,B)|_{(\fp \C)^\op}^*\to \C(-,A)|_{(\fp \C)^\op}^*\to 0
\end{displaymath}
is split exact in $(\fp C,\Ab)$.
\end{enumerate}
\end{thm}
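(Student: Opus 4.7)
The plan is to work through the flat embedding of Theorem \ref{flatemb}, viewing $C$ as $\Flat((\fp C)^{\op},\Ab)$ so that the sequence $0\to a\to b\to c\to 0$ becomes a sequence of flat functors on $(\fp C)^{\op}$. Throughout I would exploit the defining formula $\overrightarrow{F}x = C(-,x)|_{\fp C}\otimes_{\fp C}F$ together with the adjunction $\Hom_{(\fp C,\Ab)}(G,H^{*})\cong (H\otimes_{\fp C}G)^{*}$. The main organisation is to prove the cycle $(1)\Leftrightarrow(3)\Leftrightarrow(4)\Rightarrow(5)\Rightarrow(6)\Rightarrow(4)$, and to handle $(1)\Leftrightarrow(2)$ and $(1)\Leftrightarrow(7)$ as side implications.

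The equivalences $(1)\Leftrightarrow(3)\Leftrightarrow(4)$ will be definition-unwinding: $(1)$ says by definition that the sequence of flat functors is pure in $((\fp C)^{\op},\Ab)$, which means exactness after tensoring with every finitely presented $F\in(\fp C,\Ab)$, i.e.\ $(3)$; and $(3)\Leftrightarrow(4)$ holds because every $F\in(\fp C,\Ab)$ is a directed colimit of finitely presented functors and $-\otimes F$ commutes with directed colimits, which are exact in $\Ab$. The inclusions $\Inj(\fp C,\Ab)\subseteq\Fpinj(\fp C,\Ab)\subseteq(\fp C,\Ab)$ give $(4)\Rightarrow(5)\Rightarrow(6)$. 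Finally, $(1)\Leftrightarrow(2)$ is the classical ``pure $=$ directed colimit of split exact sequences'' theorem for a locally finitely presented Grothendieck category, transferred to $C$ via the flat embedding.

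To close the loop with $(6)\Rightarrow(4)$, take any $F\in(\fp C,\Ab)$ and an injective resolution $0\to F\to I^{0}\to I^{1}\to\cdots$ in $(\fp C,\Ab)$. Since $C(-,x)|_{\fp C}$ is flat by Theorem \ref{flatemb}, the functor $F\mapsto\overrightarrow{F}x = C(-,x)|_{\fp C}\otimes_{\fp C}F$ is exact for each $x\in C$, so the complex $0\to\overrightarrow{F}x\to\overrightarrow{I^{0}}x\to\overrightarrow{I^{1}}x\to\cdots$ is exact and its higher cohomology vanishes. Condition $(6)$ says each row $0\to\overrightarrow{I^{k}}a\to\overrightarrow{I^{k}}b\to\overrightarrow{I^{k}}c\to 0$ is short exact, so we have a short exact sequence of cochain complexes in $\Ab$; the long exact sequence in cohomology, combined with the vanishing, collapses to the desired $0\to\overrightarrow{F}a\to\overrightarrow{F}b\to\overrightarrow{F}c\to 0$.

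For $(1)\Leftrightarrow(7)$, the key observation is that $C(-,x)|^{*}$ is always injective in $(\fp C,\Ab)$, since $C(-,x)|_{\fp C}$ is flat and the adjunction identifies $\Hom(-,C(-,x)|^{*})$ with $(C(-,x)|_{\fp C}\otimes -)^{*}$, which is exact because $C(-,x)|_{\fp C}\otimes -$ is exact and $(-)^{*}$ is exact (as $\mathbb{Q}/\mathbb{Z}$ is injective). For $(1)\Rightarrow(7)$, dualise the exact sequence of flats to obtain an exact sequence of injectives whose leftmost term is injective, forcing it to split. For $(7)\Rightarrow(4)$, apply $\Hom(F,-)$ to the split exact sequence of $(7)$ and use the adjunction to identify the result with $0\to(\overrightarrow{F}c)^{*}\to(\overrightarrow{F}b)^{*}\to(\overrightarrow{F}a)^{*}\to 0$, which is split exact in $\Ab$; since $(-)^{*}$ is faithful exact on $\Ab$ (as $\mathbb{Q}/\mathbb{Z}$ is an injective cogenerator), it reflects exact sequences, recovering $0\to\overrightarrow{F}a\to\overrightarrow{F}b\to\overrightarrow{F}c\to 0$. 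I expect the long exact sequence bookkeeping of $(6)\Rightarrow(4)$ to be the main technical obstacle; the rest is essentially formal once the flat embedding and the $\Hom$-tensor adjunction are in hand.
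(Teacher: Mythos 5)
Your proposal is correct, but it closes the cycle by a genuinely different route from the paper. The paper runs the single chain $1\Rightarrow 2\Rightarrow 3\Rightarrow 4\Rightarrow 5\Rightarrow 6\Rightarrow 7\Rightarrow 1$: the pullback argument for $1\Rightarrow 2$, the direct-limit argument for $3\Rightarrow 4$, then $6\Rightarrow 7$ via the hom-tensor duality $(F,C(-,x)|_{\fp C}^*)\cong(\overrightarrow{F}x)^*$ together with injectivity of duals of flats (substituting $F=C(-,a)|_{\fp C}^*$ to manufacture the splitting), and finally $7\Rightarrow 1$ from $\mathbb{Q}/\mathbb{Z}$ being an injective cogenerator. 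You instead close the loop with a direct homological argument for $6\Rightarrow 4$: take an injective resolution of $F$, note that $\overrightarrow{(\,\cdot\,)}x$ is an exact functor of $F$ because each $C(-,x)|_{\fp C}$ is flat, and run the long exact cohomology sequence of the resulting short exact sequence of complexes. That argument is valid (a two-term injective copresentation and the snake lemma would already suffice) and is a transparent way of propagating exactness from injectives to all of $(\fp C,\Ab)$; your separate treatment of $7$ via $1\Rightarrow 7\Rightarrow 4$ is also arguably cleaner than the paper's terse $6\Rightarrow 7$, since dualising a (pure, hence) exact sequence of flats immediately yields an exact sequence of injectives, which must split, and $(-)^*$ reflects exactness. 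What the paper's route buys is self-containment at the front end: it proves $1\Rightarrow 2$ by the pullback argument and reaches $3$ through $2$, whereas you take $1\Leftrightarrow 2$ on citation and treat $1\Leftrightarrow 3$ as ``definition-unwinding'' --- the latter silently invokes the classical equivalence of the $\Hom$-theoretic and tensor-theoretic characterisations of purity in $((\fp C)^\op,\Ab)$, which (since the paper never fixes a definition of ``pure'' in the functor category) deserves at least an explicit reference.
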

\begin{proof}
1 implies 2: This argument is well-known and standard, but we give it for the sake of self-containment. Express $C$ as a direct limit of finitely presented objects, $C=\underrightarrow{\lim}_{\lambda\in\Lambda}C_\lambda$. Since pure-exact sequences form an exact structure on $C$, the pullback of any pure epimorphism along any other morphism exists and is a pure epimorphism. Take the pullback of our sequence along the morphisms
$$C_\lambda\to C.$$
We obtain a directed system of pure-exact sequences$$0\to A\to B_\lambda\to C_\lambda\to 0,$$each of which must be split since $C_\lambda$ is finitely presented. The direct limit of this sequence is our original sequence.

2 implies 3: Obvious: Every functor preserves split exact sequences, and the direct limit of any split exact sequence is exact.

3 implies 4: For any object $D\in C$, the functor $(\fp \C,\Ab)\to \Ab:F\mapsto\overrightarrow{F}D$ clearly preserves direct limits because it is a tensor product. By expressing $F$ as a direct limit of finitely presented functors, $F=\underrightarrow{\lim}F_\lambda$, we obtain the sequence $$0\to \overrightarrow{F}A\to \overrightarrow{F}B\to\overrightarrow{F}C\to 0$$as a direct limit of pure-exact sequence$$0\to \overrightarrow{F_\lambda}A\to \overrightarrow{F_\lambda}B\to\overrightarrow{F_\lambda}C\to 0,$$which is exact since since direct limits are exact.

4 implies 5: Obvious.

5 implies 6: Obvious.

6 implies 7: To show that our sequence is split, we need only show that, for any $F\in\Inj(\fp \C,\Ab)$, the sequence 
\begin{displaymath}
0\to (F,\C(-,C)|^*_{(\fp \C)^\op})\to (F,\C(-,B)|^*_{(\fp \C)^\op})\to (F,\C(-,A)|^*_{(\fp \C)^\op})\to 0
\end{displaymath}
is exact. The reason for this is that, since it is the dual of a flat functor, $\C(-,A)|_{\fp \C}^*$ is injective (there is a standard argument for this -- see e.g. \cite[19.14]{andersonfuller} for something similar), and therefore we may substitute $F=\C(-,A)^*$ to obtain a splitting.

Indeed, if $F\in\Inj(\fp \C,\Ab)$ then, by the hom-tensor duality, this sequence is isomorphic to 
\begin{displaymath}
0\to (\C(-,C)|_{(\fp \C)^\op}\otimes_{\fp \C}F)^*\to (\C(-,B)|_{(\fp \C)^\op}\otimes_{\fp \C}F)^*\to (\C(-,A)|_{(\fp \C)^\op}\otimes_{\fp \C}F)^*\to 0,
\end{displaymath}
which is equal to
\begin{displaymath}
0\to \left(\overrightarrow{F}B\right)^*\to\left(\overrightarrow{F}B\right)^*\to\left(\overrightarrow{F}A\right)^*\to 0
\end{displaymath}
which is exact by hypothesis.

7 implies 1: Easy since $\mathbb{Q}/\mathbb{Z}$ is an injective cogenerator.
\end{proof}
\begin{cor}[Well-known]For a pre-additive category $\A$ and a sequence
\begin{displaymath}
0\to L\to M\to N\to 0
\end{displaymath}
in $\A\Lmods=(\A,\Ab)$, the following are equivalent.
\begin{enumerate}
\item It is pure-exact.
\item It is a direct limit of split exact sequences.
\item For any pp-pair $\varphi/\psi$ in the language of left $\A$-modules, the sequence
\begin{displaymath}
0\to \varphi L/\psi L\to\varphi M/\psi M\to\varphi N/\psi N\to 0
\end{displaymath}
is exact.
\item For any $Y\in\Rmods \A$, the induced sequence 
\begin{displaymath}
0\to Y\otimes_\A L\to Y\otimes_\A M\to Y\otimes_\A N\to 0
\end{displaymath}
is exact. (This condition need only be checked when $Y$ is finitely presented.)
\item For any pure-injective $Y\in\Rmods \A$, the induced sequence 
\begin{displaymath}
0\to Y\otimes_\A L\to Y\otimes_\A M\to Y\otimes_\A N\to 0
\end{displaymath}
is exact.
\item The induced sequence
\begin{displaymath}
0\to N^*\to M^*\to L^*\to 0
\end{displaymath}
is split exact in $\Rmods \A$.
\end{enumerate}
\end{cor}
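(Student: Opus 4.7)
The plan is to specialise the main theorem to $C=A\Lmods$ and then translate each of its abstract conditions into module-theoretic language. It is standard that $\fp(A\Lmods)=A\lmods$, so the main theorem applies with this substitution, and conditions 1 and 2 of the corollary are literally conditions 1 and 2 of the theorem.

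For condition 3, I would invoke the classical equivalence between $\fp(A\lmods,\Ab)$ and the category of pp-pairs $\varphi/\psi$ in the language of left $A$-modules: a pp-pair corresponds to the finitely presented functor $F_{\varphi/\psi}:A\lmods\to\Ab$ sending $X$ to $\varphi X/\psi X$. Since this functor already preserves direct limits, $\overrightarrow{F_{\varphi/\psi}}$ is given by the same formula on all of $A\Lmods$, so condition 3 of the corollary coincides with condition 3 of the theorem.

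For conditions 4 and 5, I would use the tensor embedding $T:\Rmods A\to(A\lmods,\Ab)$, $Y\mapsto Y\otimes_A-|_{A\lmods}$, which is fully faithful with essential image $\Flat(A\lmods,\Ab)$. Because tensor product commutes with direct limits, $\overrightarrow{T(Y)}=Y\otimes_A-$ on all of $A\Lmods$. Condition 4 of the theorem therefore implies condition 4 of the corollary; the parenthetical remark that only finitely presented $Y$ need be checked follows from a direct limit argument together with exactness of direct limits, which reduces it back to condition 3 of the corollary. For condition 5 I would cite the Gruson--Jensen identification: $Y\in\Rmods A$ is pure injective iff $T(Y)$ is injective in $(A\lmods,\Ab)$; this matches condition 6 of the theorem. (Condition 5 of the theorem, about fp-injective functors, does not appear in the corollary.)

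Finally, for condition 6 of the corollary, I would use the natural isomorphism $\Hom_{\mathbb{Z}}(\Hom_A(X,L),\mathbb{Q}/\mathbb{Z})\cong L^*\otimes_A X$, valid for f.p.\ $X$, to identify $C(-,L)|_{(A\lmods)^\op}^*$ with $T(L^*)$. Because $T$ is fully faithful, split exactness of the sequence in condition 7 of the theorem transfers to split exactness of $0\to N^*\to M^*\to L^*\to 0$ in $\Rmods A$. The main obstacle here is not any single step but the accumulation of several classical identifications; the deepest of them is the Gruson--Jensen description of injective objects in $(A\lmods,\Ab)$.
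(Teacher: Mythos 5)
Your overall strategy --- specialise the main theorem to $C=A\Lmods$ and translate each of its conditions through classical identifications of finitely presented, fp-injective and injective functors in $(A\lmods,\Ab)$ --- is exactly the paper's. One genuine error to flag: the essential image of the tensor embedding $T:\Rmods A\to(A\lmods,\Ab)$, $Y\mapsto Y\otimes_A-|_{A\lmods}$, is \emph{not} $\Flat(A\lmods,\Ab)$; it is $\Fpinj(A\lmods,\Ab)$, with the injective functors corresponding to the pure injective modules (Prest, Theorem 12.1.6, which is what the paper cites). The flat objects of a functor category are direct limits of representables and, by the theorem of Crawley-Boevey quoted in Section 1, they correspond to objects of a locally finitely presented category under the Yoneda-type embedding $c\mapsto C(-,c)|_{\fp C}$ into the \emph{contravariant} functor category --- a different embedding. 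Concretely, $\mathbb{Z}/2\otimes_{\mathbb{Z}}-$ is a finitely presented but non-projective, hence non-flat, object of $(\mathbb{Z}\lmods,\Ab)$. The misstatement happens not to be load-bearing, since your argument only uses full faithfulness of $T$ and the formula $\overrightarrow{T(Y)}=Y\otimes_A-$, but it should be corrected.

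Beyond that, your treatment of condition 4 diverges mildly from the paper's. The paper matches corollary condition 4 with theorem condition 5 outright, using that the fp-injective functors are exactly those of the form $Y\otimes_A-$; you instead sandwich it, deducing corollary 4 from theorem condition 4 (all functors $F$) in one direction, and recovering it from corollary condition 3 in the other by noting that $Y\otimes_A-$ is a pp-pair functor for finitely presented $Y$ and passing to direct limits for general $Y$. Your chain does close the cycle (corollary 3 $\Rightarrow$ 4 $\Rightarrow$ 5, and 5 returns to purity via conditions 6 and 7 of the theorem), and it buys you independence from the fp-injective half of Gruson--Jensen at that step; you still need the injective half for condition 5, exactly as the paper does. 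The handling of conditions 3 and 6 (pp pairs as finitely presented functors; the natural isomorphism $(-,X)^*|_{A\lmods}\cong X^*\otimes_A-|_{A\lmods}$ plus full faithfulness of $T$) coincides with the paper's.
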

\begin{proof}
A functor $F\in(\A\lmods,\Ab)$ is:
\begin{itemize}
\item finitely presented if and only if it comes from a pp-pair \cite[Section 10.2.5]{prest2009}.
\item fp-injective if and only if it is of the form $Y\otimes_\A-$ for some $Y\in\Rmods \A$ by \cite[Theorem 12.1.6]{prest2009}.
\item injective if and only if it is of the form $Y\otimes_\A-$ for some pure-injective $Y\in \Rmods \A$ by \cite[Theorem 12.1.6]{prest2009} (uses the fact that 1 is equivalent to 4).
\end{itemize}
For each $X\in \A\Lmods$, there is an isomorphism $(-,X)|^*_{(\A\lmods)^\op}\cong X^*\otimes_\A-|_{\A\lmods}$ which is natural in $X$  \cite[3.2.11]{enochsjenda2011}. Therefore,  
\begin{displaymath}
0\to (-,N)|^*_{(\A\lmods)^\op}\to (-,M)|^*_{(\A\lmods)^\op}\to (-,L)|^*_{(\A\lmods)^\op}\to
\end{displaymath}
is split exact if and only if
\begin{displaymath}
0\to N^*\otimes_\A-|_{\A\lmods}\to M^*\otimes_\A-|_{\A\lmods}\to L^*\otimes_\A-|_{\A\lmods}\to 0
\end{displaymath}
is split exact. Since $\Rmods \A\to(\A\lmods,\Ab):Y\mapsto Y\otimes-|_{\A\lmods}$ is fully faithful, this is equivalent to 6.
\end{proof}

\bibliographystyle{amsplain}
\bibliography{cites}

\providecommand{\bysame}{\leavevmode\hbox to3em{\hrulefill}\thinspace}
\providecommand{\MR}{\relax\ifhmode\unskip\space\fi MR }
\providecommand{\MRhref}[2]{%
  \href{http://www.ams.org/mathscinet-getitem?mr=#1}{#2}
}
\providecommand{\href}[2]{#2}
\begin{thebibliography}{10}

\bibitem{andersonfuller}
Frank~W. Anderson and Kent~R. Fuller, \emph{{Rings and Categories of Modules
  }}, 2 ed., Graduate Texts in Mathematics, vol.~13, Springer-Verlag, 1974.

\bibitem{auslander1976}
Maurice Auslander, \emph{Large modules over artin algebras}, Algebra, Topology,
  and Category Theory (Alex Heller and Myles Tierney, eds.), Academic Press,
  1976, pp.~1--17.

\bibitem{buhler}
Theo Bühler, \emph{Exact categories}, Expositiones Mathematicae \textbf{28}
  (2010), no.~1, 1--69.

\bibitem{crawleyboevey1994}
William Crawley-Boevey, \emph{Locally finitely presented additive categories},
  Communications in Algebra \textbf{22} (1994), no.~5, 1641--1674.

\bibitem{dean2017}
Samuel Dean, \emph{Duality and contravariant functors in the representation
  theory of artin algebras}, Journal of Algebra and Its Applications
  \textbf{18} (2019), no.~06, 1950111.

\bibitem{enochsjenda2011}
Edgar~E. Enochs and Overtoun M.~G. Jenda, \emph{{Relative Homological
  Algebra}}, De Gruyter, Berlin, New York, 2000.

\bibitem{fisher1968}
Janet~L Fisher, \emph{The tensor product of functors; satellites; and derived
  functors}, Journal of Algebra \textbf{8} (1968), no.~3, 277--294.

\bibitem{maclane1998}
Saunders~Mac Lane, \emph{{Categories For the Working Mathematician}}, 2 ed.,
  Springer, 1971.

\bibitem{fisherpalmquistnewell1971}
J.~Fisher Palmquist and David~C. Newell, \emph{Bifunctors and adjoint pairs},
  Transactions of the American Mathematical Society \textbf{155} (1971), no.~2,
  293--303.

\bibitem{prest2009}
Mike Prest, \emph{{Purity, Spectra and Localisation}}, Encyclopedia of
  Mathematics and its Applications, Cambridge University Press, 2009.

\end{thebibliography}


\end{document}